\documentclass[12pt]{article}
\setlength{\voffset}{-2cm} \addtolength{\textheight}{3cm}
\addtolength{\hoffset}{-1cm}\addtolength{\textwidth}{2cm}
\usepackage{amssymb,amsmath,amsfonts}
\usepackage{amsthm}
\usepackage{color}
\numberwithin{equation}{section} \theoremstyle{plain}
\newtheorem{theorem}{Theorem}[section]
\newtheorem{corollary}[theorem]{Corollary}
\newtheorem{lemma}[theorem]{Lemma}
\theoremstyle{definition}
\newtheorem{definition}{Definition}[section]

\theoremstyle{remark}

\begin{document}

\title{On Lin's condition for products of random variables }\author{Alexander Il'inskii$^1$, Sofiya  Ostrovska$^2$}
\date{}
\maketitle

\begin{center}
\textit{ $^1$Department of Fundamental Mathematics,  Karazin National University, Kharkov, Ukraine}
\end{center}
\begin{center}
\textit{ $^2$Department of
Mathematics,  Atilim University,   Ankara, Turkey}
\end{center}

\begin{abstract} The paper presents an elaboration of some results on Lin's conditions. A new proof of the fact that  if densities of independent random variables $\xi_1$ and $\xi_2$ satisfy Lin's condition, the same is true for their product is presented. Also, it is shown that without the condition of independence, the statement is no longer valid.
\end{abstract}

\noindent {\small \textbf{Keywords:} random variable, absolutely continuous  distribution, Lin's condition
\vspace{0.2cm}}

\noindent {\small \textbf{Mathematics Subject Classifications:}
60E05}\vspace{0.2cm}

\section{Introduction}

Lin's condition plays a significant role in establishing `checkable' conditions for the moment (in)determinacy of probability distributions. This condition expresses certain regularity in the behaviour of probability densities. Given a probability density $f$, the tool used here is a function $L_f$ which was brought into consideration by G. D. Lin \cite{lin} and called Lin's function in subsequent researches starting from \cite{bernoulli}. The function is defined as follows.

\begin{definition} Let $f$ be a probability density continuously differentiable on $(0,\infty)$. The function
\begin{equation} \label{linfun}
L_f(x):=-\frac{xf^\prime(x)}{f(x)}
\end{equation}
is called \textit{Lin's function} of $f$.
\end{definition}

Clearly, Lin's function of $f$ is defined only at the points where $f$ does not vanish. In this work, we deal only with probability densities of positive random variables whose Lin's functions are defined for all $x>0.$ In particular, it is assumed that all densities do not vanish for all $x>0,$ that is Lin's function for them is well-defined and, in addition, only continuously-differentiable densities are considered. For such densities, the following condition was first considered by G. D. Lin in \cite{lin} with regard to the problem of moments.

\begin{definition}\label{lincond} Let $f\in C^1(0,\infty)$ be a probability density of a positive random variable. It is said that $f$ satisfies \textit{ Lin's condition} on $(x_0,\infty)$ if $L_f(x)$ is monotone increasing on $(x_0,\infty)$ and $\displaystyle \lim_{x\rightarrow +\infty}L_f(x)=+\infty$.
\end{definition}

 As this condition is used widely to investigate the moment determinacy of absolutely continuous probability distributions (see, \cite{recent, stirzaker} and references therein), it is a natural question to ask which operations on random variables preserve Lin's condition. Recently, Kopanov and Stoyanov in \cite{kopanov} established that if a density $f\in C^1(0,+\infty)$ of a random variable $X$ satisfies Lin's condition, then the densities of $X^r, r>0$ and $\ln X$ also satisfy Lin's condition. Further, if $L_f(x)/x\rightarrow +\infty$ as $x\rightarrow +\infty,$ then the density of $e^X$ also satisfies Lin's condition. In the same article \cite{kopanov}, it has been stated that if $X_1$ and $X_2$ are \textit{independent} positive random variables whose densities satisfy Lin's condition, then the density of their product also satisfies Lin's condition.  The approach suggested in \cite{kopanov} is based on the application of the mean value theorem for integrals. In this work, a different approach is proposed, which may be used in other problems, such as estimation of moments. Furthermore, it is demonstrated that the condition of independence is crucial here. In general, the statement is not true for the product of  dependent random variables whose densities satisfy Lin's condition.

\section{Statement of Results}

The first result of this work has been presented in \cite{kopanov}, and its proof based on the application of the mean value theorem is given in \cite{kopanov2}.  In the present paper, an alternative proof is provided which uses a different  technique. 

\begin{theorem}\label{th1} $\cite{kopanov}$ If $\xi_1$ and $\xi_2$ are positive independent random variables whose densities $f_1$ and $f_2$ satisfy Lin's condition on $(0,+\infty)$, then the density $g$ of their product satisfies Lin's condition on $(0,+\infty)$.
\end{theorem}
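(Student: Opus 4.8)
The plan is to derive a closed integral representation for $L_g$ and to extract both halves of Lin's condition from it. Writing the density of the product as the multiplicative convolution and differentiating under the integral sign, then substituting the defining identity $f_2'(y)=-L_{f_2}(y)f_2(y)/y$, I would obtain
\[
g(x)=\int_0^\infty \frac{1}{s}\,f_1(x/s)\,f_2(s)\,ds,\qquad
L_g(x)=\frac{\displaystyle\int_0^\infty L_{f_2}(s)\,\frac{1}{s}\,f_1(x/s)\,f_2(s)\,ds}{\displaystyle\int_0^\infty \frac{1}{s}\,f_1(x/s)\,f_2(s)\,ds}.
\]
Thus $L_g(x)$ is the average of the function $L_{f_2}$ against the probability weight $q_x(s)\,ds$ with $q_x(s)\propto f_1(x/s)f_2(s)/s$; probabilistically, $L_g(x)=E\big[L_{f_2}(\xi_2)\mid \xi_1\xi_2=x\big]$. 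By the symmetry of the construction the same number is also the analogous average of $L_{f_1}$, so the two hypotheses enter symmetrically.

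To prove that $L_g$ is increasing I would establish that the family of weights $\{q_x\}_{x>0}$ has the monotone likelihood ratio property in $s$. For $x_2>x_1$ the ratio $q_{x_2}(s)/q_{x_1}(s)$ is proportional to $f_1(x_2/s)/f_1(x_1/s)$, and a short computation using $\frac{d}{ds}\log f_1(x_i/s)=\tfrac1s L_{f_1}(x_i/s)$ gives
\[
\frac{d}{ds}\log\frac{f_1(x_2/s)}{f_1(x_1/s)}
=\frac{1}{s}\big(L_{f_1}(x_2/s)-L_{f_1}(x_1/s)\big)\ge 0,
\]
since $L_{f_1}$ is increasing; this is exactly where Lin's monotonicity for $f_1$ is used. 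Hence $q_{x_2}$ dominates $q_{x_1}$ in the likelihood-ratio order, and therefore in the usual stochastic order, so that integrating the increasing function $L_{f_2}$ against these ordered measures yields $L_g(x_1)\le L_g(x_2)$. (Conceptually, this is the statement that $s\mapsto f(e^s)e^s$ log-concave is preserved under convolution.)

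Since $L_g$ is now monotone, the limit $\Lambda=\lim_{x\to\infty}L_g(x)$ exists in $(-\infty,+\infty]$, and it remains to exclude $\Lambda<\infty$. Assuming $\Lambda<\infty$, the bound $L_g\le\Lambda$ integrates to a polynomial lower tail $g(x)\ge g(1)\,x^{-\Lambda}$ for $x\ge1$. On the other hand, $L_{f_i}(x)\to\infty$ forces super-polynomial decay: for every $N$ one has $f_i(x)\le C_N x^{-N}$ for large $x$, whence all moments of $\xi_1,\xi_2$ are finite. Splitting the convolution at $s=\sqrt{x}$ and applying these decay estimates to each piece, I expect to reach $g(x)\le C\,x^{-(\Lambda+2)}$ for large $x$, contradicting the lower bound and forcing $\Lambda=+\infty$.

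The routine parts are the differentiation under the integral and the likelihood-ratio computation. The \textbf{genuine obstacle} is the tail estimate for the divergence: it must exploit the \emph{full} super-polynomial decay of $f_1$ and $f_2$ rather than a single fixed power (the crude bound $f_1\le\|f_1\|_\infty$ on one factor alone only produces a rate like $x^{-N/2}$), together with the finiteness of all moments, in order to beat the polynomial lower bound $x^{-\Lambda}$. Equivalently, after passing to logarithms the claim becomes the preservation of log-concavity and of super-exponential tails under ordinary convolution, which is the conceptual heart of the argument.
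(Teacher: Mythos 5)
Your proposal is correct, and it splits into a half that coincides with the paper's proof and a half that is genuinely different. For the monotonicity of $L_g$, your argument and the paper's are the same mathematics in two languages: your computation $\frac{d}{ds}\log\bigl(f_1(x_2/s)/f_1(x_1/s)\bigr)=\bigl(L_{f_1}(x_2/s)-L_{f_1}(x_1/s)\bigr)/s\ge 0$ is precisely the paper's Lemma \ref{lem2} (that $f_1(ax)/f_1(bx)$ is monotone when $L_{f_1}$ is), and the paper's antisymmetrization of the double integral over $\{u>v\}$ and $\{u<v\}$ is exactly a hands-on proof of the fact you import from stochastic-order theory, namely that monotone-likelihood-ratio (MLR) ordered weights integrate the increasing function $L_{f_2}$ monotonically. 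Where you genuinely diverge is the proof that $L_g(x)\to+\infty$. The paper gets this directly and quantitatively: substituting $t\mapsto\sqrt{x}\,t$ and adding the two representations of Lemma \ref{lem1} gives
\begin{equation*}
2g(x)L_g(x)=\int_0^\infty f_1\left(\frac{\sqrt{x}}{t}\right)f_2(\sqrt{x}\,t)\left[L_{f_1}\left(\frac{\sqrt{x}}{t}\right)+L_{f_2}(\sqrt{x}\,t)\right]\frac{dt}{t}\,,
\end{equation*}
and bounding the bracket below pointwise by $\min\{L_{f_1}(\sqrt{x}),L_{f_2}(\sqrt{x})\}$ yields the explicit rate $L_g(x)\ge\frac{1}{2}\min\{L_{f_1}(\sqrt{x}),L_{f_2}(\sqrt{x})\}$ (recorded as a corollary for $f_1=f_2$). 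You instead argue by contradiction from tail behaviour: a finite limit $\Lambda$ of the monotone function $L_g$ would give $g(x)\ge g(1)x^{-\Lambda}$ for $x\ge 1$, while $L_{f_i}\to+\infty$ forces $f_i(x)\le C_Nx^{-N}$ for every $N$, hence all moments finite; and your split of the convolution at $s=\sqrt{x}$ does close the estimate, since on $s\le\sqrt{x}$ one bounds $f_1(x/s)\le C_N(x/s)^{-N}$ and integrates $s^{N-1}f_2(s)$ against a finite moment, while the piece $s\ge\sqrt{x}$ reduces to the same form under $s\mapsto x/s$; this gives $g(x)\le C_N'x^{-N}$ for every $N$, contradicting the polynomial lower bound. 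The trade-off is clear: the paper's route is shorter and produces a divergence rate, which your contradiction argument cannot; your route uses nothing beyond the already-established monotonicity plus elementary tail estimates, and in particular it avoids the paper's pointwise lower bound on the bracket, which is immediate only when the Lin functions are nonnegative and needs extra care when they take negative values near the origin --- something Definition \ref{lincond} permits (e.g. $L_f(x)=x-1$ for the Gamma density proportional to $xe^{-x}$), since for small $t$ the term $L_{f_2}(\sqrt{x}\,t)$ can be arbitrarily negative while $L_{f_1}(\sqrt{x}/t)$ may grow too slowly to compensate. In that specific sense your less quantitative argument is the more robust of the two.
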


Evidently, the result can be extended by induction on the product of $n$ independent random variables.

The next theorem demonstrates that the condition of $\xi_1$ and $\xi_2$ being independent is crucial for the validity of the statement and, in general, it cannot be left out whatever the densities of $\xi_1$ and $\xi_2$ are.

\begin{theorem}\label{th2} Let  $f_1$ and $f_2$  be two densities of the positive random variables satisfying Lin's condition on $(0,+\infty)$.
Then there exists a random vector $(\xi_1,\xi_2)$ with absolutely continuous distribution such that the coordinates $\xi_1$ and $\xi_2$ have densities $f_1$ and $f_2$ respectively, the density $g$ of the product $\xi_1\cdot\xi_2$ is continuously differentiable on $(0,+\infty)$ and  the following relations are valid\,$:$
\begin{equation}\label{limsupinf}
\limsup_{x\rightarrow +\infty} L_g(x)=+\infty\,,\quad\liminf_{x\rightarrow +\infty} L_g(x)=-\infty\,.
\end{equation}
\end{theorem}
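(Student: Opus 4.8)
The plan is to pass to logarithmic coordinates, where the product becomes a sum and Lin's function takes a transparent form. Writing $Z=\xi_1\xi_2=e^{W}$ with $W=\log\xi_1+\log\xi_2$ and letting $p$ denote the density of $W$, a direct computation from $g(z)=p(\log z)/z$ gives
\[
L_g(e^{w})=1-\frac{p'(w)}{p(w)},\qquad w=\log z .
\]
Consequently \eqref{limsupinf} is equivalent to the single requirement that the logarithmic derivative $p'/p$ oscillate with unbounded amplitude as $w\to+\infty$, namely $\limsup_{w\to\infty}p'(w)/p(w)=+\infty$ and $\liminf_{w\to\infty}p'(w)/p(w)=-\infty$. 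Thus the whole problem reduces to constructing an absolutely continuous coupling of $\eta_1:=\log\xi_1$ and $\eta_2:=\log\xi_2$ -- whose marginal densities $\varphi_i(w)=e^{w}f_i(e^{w})$ are prescribed, positive and $C^1$ on $\mathbb{R}$ -- such that the density of the sum $\eta_1+\eta_2$ has this oscillatory logarithmic derivative.

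First I would fix the baseline given by independence: its sum-density $p_0=\varphi_1*\varphi_2$ is smooth and strictly positive, and by Theorem \ref{th1} it alone would satisfy Lin's condition. To spoil this I would perturb the independent joint density multiplicatively,
\[
h(s,t)=\varphi_1(s)\,\varphi_2(t)\Bigl(1+\sum_{n\ge1}a_n(s)\,b_n(t)\Bigr),
\]
where each $a_n,b_n$ is a smooth, compactly supported function normalised by $\int a_n(s)\varphi_1(s)\,ds=\int b_n(t)\varphi_2(t)\,dt=0$. The product structure $a_n(s)b_n(t)$ together with these two mean-zero conditions guarantees, term by term, that both marginals are preserved exactly; hence $\xi_1,\xi_2$ keep the densities $f_1,f_2$ after transforming back to $(e^{\eta_1},e^{\eta_2})$. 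Nonnegativity of $h$ is ensured by taking the supports of the summands pairwise disjoint and each product bounded below by $-(1-\varepsilon)$, so that at most one term is active at any point and $p$ stays of order $p_0$.

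The summands are placed along a sequence of anti-diagonals $s+t\approx w_n$ with $w_n\to+\infty$, so that the $n$-th term injects into the sum-density $p=p_0+\sum_n\beta_n$ a localised feature $\beta_n$ centred at $w_n$. Because the perturbation integrates to zero in each variable, $\beta_n$ itself integrates to zero, i.e. it is a genuine wiggle; across it the slope $p'$ runs through both large positive and large negative values while $p$ stays of order $p_0(w_n)$. Choosing the $n$-th feature to have height comparable to $p_0(w_n)$ but width $\delta_n\to0$ makes $|p'|$ of order $p_0(w_n)/\delta_n$ there, so that $p'/p$ attains values of order $\pm1/\delta_n$. Letting $\delta_n\to0$ forces $\limsup p'/p=+\infty$ and $\liminf p'/p=-\infty$, which is exactly what is required. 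Smoothing the bumps keeps $h$, and therefore $g$, continuously differentiable on $(0,+\infty)$.

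The main obstacle is to satisfy all the constraints simultaneously: exact preservation of the two prescribed marginals, nonnegativity of the joint density, $C^1$-smoothness of $g$, and yet unbounded swings of $p'/p$. The tension lies between nonnegativity, which caps the multiplicative perturbation at size $1$ and hence limits the \emph{height} of each feature $\beta_n$ by the local value of $p_0$, and the demand for arbitrarily large logarithmic derivatives. This is resolved by shifting the burden from height to \emph{sharpness}: even a feature of bounded relative height produces an arbitrarily steep logarithmic derivative once its width $\delta_n$ is small enough. The remaining work is quantitative -- selecting $w_n$, $\delta_n$, and the profiles $a_n,b_n$, and checking that the tails of $p_0$ do not swamp the features -- and, notably, the argument never uses Lin's condition on $f_1,f_2$, which makes precise the assertion that dependence alone destroys the property.
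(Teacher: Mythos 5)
Your reduction to logarithmic coordinates is correct ($L_g(e^w)=1-p'(w)/p(w)$, so the theorem is equivalent to unbounded two-sided oscillation of $p'/p$), and your overall architecture --- perturb the independent coupling by marginal-preserving bumps placed at locations tending to infinity --- is exactly the architecture of the paper's proof, which works in the original coordinates and restores the marginals by adding four shifted copies of a bump with alternating signs rather than by your mean-zero factor conditions. However, the quantitative mechanism you propose fails. Your feature is not a free function that you may prescribe: it is a convolution, $\beta_n=(a_n\varphi_1)*(b_n\varphi_2)$. If both factors are supported in intervals of length $\delta_n$ and $|a_n b_n|\le 1$, then
\begin{equation*}
\|\beta_n\|_\infty \le \Phi_n\,\delta_n,
\qquad
\|\beta_n'\|_\infty \le \|(a_n\varphi_1)'\|_\infty\,\|b_n\varphi_2\|_1
\le \frac{C\,\Phi_n}{\delta_n}\cdot \delta_n = C\,\Phi_n ,
\end{equation*}
where $\Phi_n$ is the local supremum of $\varphi_1(s)\varphi_2(t)$ near the chosen center. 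So the height of the feature shrinks proportionally to its width, and --- the decisive point --- $|\beta_n'|$ is bounded \emph{independently of} $\delta_n$: sharpening the bump does not steepen $p$ at all. Consequently ``height comparable to $p_0(w_n)$ with width $\delta_n\to 0$'' is unattainable, and the swing of $p'/p$ near $w_n$ is capped by $C\Phi_n/p_0(w_n)$ plus the baseline drift, a quantity over which you have no control via $\delta_n$. This is not a repairable bookkeeping issue: take $f_1=f_2$ lognormal, so that $\varphi_1=\varphi_2$ is a standard Gaussian. Then $\Phi_n/p_0(w_n)$ is bounded by an absolute constant, the baseline satisfies $p_0'/p_0=-w/2\to-\infty$, and your perturbation can move $p'/p$ by only $O(1)$; hence $p'(w)/p(w)\to-\infty$ everywhere, $\limsup p'/p=-\infty$, and \eqref{limsupinf} fails for every admissible choice of $w_n,\delta_n,a_n,b_n$.

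The missing idea --- which is the crux of the paper's proof --- is to gain the large derivative from \emph{frequency}, not from narrowness, and to make the oscillation coherent along the level sets of the product. The paper perturbs by $\beta\sin(\nu xy)\,\rho(x-c,y-c)$: since the phase $\nu xy$ is constant along each hyperbola $xy=z$, the oscillation survives the integration in \eqref{product} with no cancellation, producing in $g$ a term essentially of the form $\beta\sin(\nu z)\log(x_3/x_2)$, i.e.\ of fixed amplitude and fixed support but frequency $\nu$; its derivative is of order $\nu$, while $p'$ is bounded independently of $\nu$, and letting $\nu\to\infty$ along a sequence of disjoint squares gives \eqref{limsupinf}. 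In your coordinates this means the envelope of the $n$-th block must have \emph{fixed} size while the perturbation oscillates like $\sin(\nu_n(s+t))$ inside it, with $\nu_n\to\infty$. Note also a structural obstruction to doing this within your ansatz as written: a single product $a_n(s)b_n(t)$ can never be a nontrivial compactly supported function of $s+t$ (the factorization would force exponentials), so a coherent oscillation requires at least two products per block, e.g.\ $\chi(s)\chi(t)\sin\bigl(\nu(s+t)\bigr)=[\chi(s)\sin\nu s][\chi(t)\cos\nu t]+[\chi(s)\cos\nu s][\chi(t)\sin\nu t]$, whose factors are then only approximately mean-zero --- which is precisely the difficulty the paper's four-shifted-copies trick is designed to avoid.
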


Obviously, equalities \eqref{limsupinf} imply that $g$ does not satisfy Lin's condition on any interval $(x_0,\infty)$.

\section{Some auxiliary results}

To begin with, let us recall that, if $f(x,y)$ is a joint probability density of positive random variables $\xi_2$ and $\xi_2$,  then the density $g$ of their product is given by:
\begin{equation}\label{product}
g(x)=\int_0^{+\infty}f\left(t, \frac{x}{t}\right)\frac{dt}{t}\,.
\end{equation}
See, for example \cite[page 618, formula (18.5-17)]{korn}. With the help of \eqref{product}, the next useful outcome can be derived.
\begin{lemma}\label{lem1} $\cite{kopanov2}$ Let $\xi_1$ and $\xi_2$ be independent random variables whose densities $f_1$ and $f_2$ possess Lin's functions. If $g$ is the (continuous) density of the product $\xi_1\cdot\xi_2$, then\,$:$
\begin{equation}\label{lg}
L_g(x)=\frac{1}{g(x)}\int_0^\infty f_1\left(\frac{x}{t}\right)f_2(t)L_{f_2}(t)\frac{dt}{t}
=\frac{1}{g(x)}\int_0^\infty f_1\left(\frac{x}{t}\right)f_2(t)L_{f_1}\left(\frac{x}{t}\right)
\frac{dt}{t}\,.
\end{equation}
\end{lemma}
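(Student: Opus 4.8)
The plan is to compute $L_g(x)=-xg'(x)/g(x)$ directly from the integral representation of $g$, converting the derivative of a factor density into its Lin's function by means of the defining relation $uf'(u)=-L_f(u)f(u)$, which is just \eqref{linfun} rearranged. By independence the joint density factorises, $f(x,y)=f_1(x)f_2(y)$, so formula \eqref{product} becomes
\[
g(x)=\int_0^\infty f_1(t)\,f_2\!\left(\frac{x}{t}\right)\frac{dt}{t},
\]
and the substitution $t\mapsto x/t$ shows that this equals $\int_0^\infty f_1(x/t)\,f_2(t)\,dt/t$ as well. I would exploit both of these forms, choosing in each computation the one in which the variable $x$ appears in the factor whose Lin's function I wish to produce.

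First I would differentiate under the integral sign in the form where the $x$-dependence sits in $f_1(x/t)$, obtaining $g'(x)=\int_0^\infty f_1'(x/t)\,f_2(t)\,dt/t^2$. Multiplying by $x$ and observing that $\frac{x}{t}f_1'(x/t)=-L_{f_1}(x/t)\,f_1(x/t)$ turns $xg'(x)$ into $-\int_0^\infty f_1(x/t)\,f_2(t)\,L_{f_1}(x/t)\,dt/t$; division by $-g(x)$ then yields the second expression in \eqref{lg}. To reach the first expression I would instead differentiate the form in which the $x$-dependence sits in $f_2(x/t)$, which by the same manipulation gives $xg'(x)=-\int_0^\infty f_1(t)\,f_2(x/t)\,L_{f_2}(x/t)\,dt/t$; applying the substitution $t\mapsto x/t$ rewrites this integral as $\int_0^\infty f_1(x/t)\,f_2(t)\,L_{f_2}(t)\,dt/t$, the first expression in \eqref{lg}. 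Since both routes compute the same quantity $-xg'(x)/g(x)$, the equality of the two integrals is obtained automatically.

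The step I expect to be the main obstacle is the rigorous justification of differentiation under the integral sign, since the hypotheses supply only $f_1,f_2\in C^1(0,\infty)$ with well-defined Lin's functions. I would handle this by fixing a compact subinterval of $(0,\infty)$ for $x$ and exhibiting a locally integrable dominating function for $\partial_x\!\left[f_1(x/t)f_2(t)/t\right]$, combining the continuity of $f_1'$ with the integrability of the density factors near $t=0$ and $t=\infty$; once the interchange is licensed, every subsequent manipulation is purely algebraic. It also remains to note that $g(x)>0$ for all $x>0$, so that $L_g$ is well-defined — this is immediate from the positivity of $f_1$ and $f_2$ on $(0,\infty)$ assumed throughout.
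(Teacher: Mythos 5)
Your core computation is correct, and it is precisely the derivation the paper has in mind: the paper itself gives no proof of Lemma \ref{lem1} (it is quoted from \cite{kopanov2}, with only the remark that it ``can be derived'' with the help of \eqref{product}). Differentiating the two symmetric forms of the product formula under the integral sign and converting $\frac{x}{t}f_i'\left(\frac{x}{t}\right)$ into $-L_{f_i}\left(\frac{x}{t}\right)f_i\left(\frac{x}{t}\right)$ via \eqref{linfun} yields exactly the two expressions in \eqref{lg}, and your substitution $t\mapsto x/t$ is handled correctly; the positivity of $g$ follows, as you say, from the standing assumption that $f_1,f_2>0$ on $(0,\infty)$.

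One caveat about the step you yourself flagged. Your proposed domination argument --- ``combining the continuity of $f_1'$ with the integrability of the density factors'' --- is not actually sufficient under the stated hypotheses. Continuity of $f_1'$ controls $f_1'(x/t)$ only for $t$ in compact subsets of $(0,\infty)$; as $t\to 0$ or $t\to\infty$ the argument $x/t$ escapes every compact set, and nothing in the hypotheses (positivity, $f_1\in C^1$, mere existence of $L_{f_1}$) bounds $f_1'$ near $0$ or near $\infty$. One can build a positive $C^1$ density whose derivative has violent spikes near the origin (e.g.\ modulating an exponential density by $2+\sin\left(e^{1/u}\right)$), and then $f_1'(x/t)f_2(t)/t^2$ admits no integrable majorant for small $x$; in such examples even the integrals on the right-hand side of \eqref{lg} diverge. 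So the interchange of derivative and integral genuinely requires an extra hypothesis (local uniform integrability of the differentiated integrand, say), and you should state it as such rather than claim it follows from continuity. To be fair, this defect is inherited from the lemma as stated: its hypotheses already presuppose implicitly that $g$ is a continuously differentiable density and that the displayed integrals make sense, and \cite{kopanov2} works at the same level of rigor. With that hypothesis made explicit, your proof is complete.
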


\begin{lemma}\label{lem2} Let $f$ be a probability density such that $L_f(x)$ is monotone increasing for all $x>0.$ Then, for every $0<a<b,$ the function \begin{equation} \label{tau}
\tau(x):=\frac{f(ax)}{f(bx)}
\end{equation}
is monotone increasing in $x$.
\end{lemma}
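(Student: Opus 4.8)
The plan is to reduce the monotonicity of $\tau$ to the monotonicity of $L_f$ by computing a logarithmic derivative. Since $f$ is a positive $C^1$ density (by the standing assumptions, $f$ does not vanish on $(0,\infty)$ and $L_f$ is defined there), the function $\tau(x)=f(ax)/f(bx)$ is positive and differentiable on $(0,\infty)$. Hence it suffices to prove that $(\ln\tau)'(x)\ge 0$ for all $x>0$, since this forces $\tau'(x)=\tau(x)\,(\ln\tau)'(x)\ge 0$.

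First I would differentiate the logarithm directly, obtaining
\[
(\ln\tau)'(x)=a\,\frac{f'(ax)}{f(ax)}-b\,\frac{f'(bx)}{f(bx)}.
\]
The key step is then to eliminate $f'$ in favour of Lin's function using the identity $f'(y)/f(y)=-L_f(y)/y$, which is immediate from the definition \eqref{linfun}. Substituting $y=ax$ and $y=bx$ makes the factors $a$ and $b$ cancel cleanly:
\[
a\,\frac{f'(ax)}{f(ax)}=-\frac{L_f(ax)}{x},\qquad b\,\frac{f'(bx)}{f(bx)}=-\frac{L_f(bx)}{x},
\]
so that
\[
(\ln\tau)'(x)=\frac{L_f(bx)-L_f(ax)}{x}.
\]

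Finally, I would invoke the hypothesis: because $0<a<b$ and $x>0$ we have $ax<bx$, and the monotone increase of $L_f$ gives $L_f(bx)\ge L_f(ax)$, whence $(\ln\tau)'(x)\ge 0$ on all of $(0,\infty)$. This completes the argument. I do not expect a genuine obstacle here—the proof is a short, direct computation—and the only point requiring care is that $\tau$ be well-defined and strictly positive so that taking its logarithmic derivative is legitimate; this is guaranteed precisely by the standing assumption that $f$ is strictly positive on $(0,\infty)$.
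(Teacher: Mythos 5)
Your proposal is correct and is essentially the paper's own argument: the paper computes $\tau'(x)=\frac{f(ax)}{xf(bx)}\left[L_f(bx)-L_f(ax)\right]$ directly by the quotient rule and the identity $f'(y)/f(y)=-L_f(y)/y$, which is exactly what your logarithmic-derivative computation yields after multiplying $(\ln\tau)'(x)$ by $\tau(x)$. The only cosmetic difference is that the paper asserts the strict inequality $\tau'(x)>0$ (reading ``monotone increasing'' for $L_f$ as strict), whereas you wrote $\geq$; your argument gives the strict version verbatim under that reading.
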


\begin{proof}
Indeed,
\begin{equation*}
\tau^\prime(x)=\frac{f(ax)}{xf(bx)}\left[L_f(bx)-L_f(ax)\right]>0\;\;\forall x>0\,.
\end{equation*}
\end{proof}

\section{Proofs of the Theorems}

\textbf{Proof of Theorem \ref{th1}}. \textbf{1}. First, we are going to prove that $L_g(x)$ is monotone increasing on $(0,+\infty)$.  Select $0<x<y$ and consider $L_g(y)-L_g(x)$. By virtue of \eqref{product} and \eqref{lg}, one has:
\begin{equation*}
\begin{split}
&L_g(y)-L_g(x)=\\
&\frac{\int_0^{+\infty}f_1(y/v)f_2(v)L_{f_2}(v)\frac{dv}{v}}{\int_0^{+\infty}f_1(y/v)f_2(v)\frac{dv}{v}}
-\frac{\int_0^{+\infty}f_1(x/u)f_2(u)L_{f_2}(u)\frac{du}{u}}{\int_0^{+\infty}f_1(x/u)f_2(u)\frac{du}{u}}=\\
&\frac{1}{g(x)g(y)}\int_0^{+\infty}\int_0^{+\infty} f_1(y/v)f_2(v)f_1(x/u)f_2(u)\left[L_{f_2}(v)-L_{f_2}(u) \right]\frac{dudv}{uv}=\\
&\frac{1}{g(x)g(y)}\left[\int\!\int_{A_1}+ \int\!\int_{A_2}\right]\,,
\end{split}
\end{equation*}
where $\displaystyle A_1=\{(u,v):u>v\}$ and $\displaystyle A_2=\{(u,v):u<v\}$. Now, interchanging $u$ and $v$ in $\int\!\int_{A_2}$, one derives:
\begin{equation*}
\int\!\int_{A_2}=-\int\!\int_{A_1}f_1(y/u)f_1(x/v)f_2(u)f_2(v)\left[L_{f_2}(v)-L_{f_2}(u)\right]\frac{dudv}{uv}\,.
\end{equation*}
Therefore,
\begin{equation}\label{lgx}
\begin{split}
&L_g(y)-L_g(x)=\\
&\int\!\int_{A_1}\frac{f_2(u)f_2(v)}{uv}\left[L_{f_2}(v)-L_{f_2}(u)\right]\cdot\left[f_1(y/v)f_1(x/u)-f_1(y/u)f_1(x/v)
\right]dudv\,.
\end{split}
\end{equation}
Now, consider the expressions in the both brackets. Since $u>v$ in $A_1$ and $L_{f_2}$ is strictly increasing, it follows that
the first one is negative everywhere in $A_1$. The second one can be rewritten as follows:
\begin{align*}
f_1(y/v)f_1(x/u)-f_1(y/u)f_1(x/v)=f_1(y/v)f_1(x/u)\left[1-\frac{\tau(y)}{\tau(x)}\right]\,,
\end{align*}
where $\tau (x)$ is defined by \eqref{tau} with $a=1/u<1/v=b.$ Lemma \ref{lem2} implies that $\tau(y)>\tau(x)$ and, as a result $f_1(y/v)f_1(x/u)-f_1(y/u)f_1(x/v)<0$ in $A_1$. To summarize, the integrand in \eqref{lgx} is positive, whence  $L_g
(y)>L_g(x)$ whenever $y>x$, as stated.

\bigskip
\textbf{2}.
At this stage, we are going to prove that $L_g(x) \rightarrow +\infty$ as $x\rightarrow +\infty$.
By virtue of Lemma \ref{lem1}, formula \eqref{lg}, one has:
\begin{align*}
g(x) L_g(x)= \int_0^\infty f_1\left(\frac{x}{t}\right)f_2(t)L_{f_2}(t)\frac{dt}{t}=
\int_0^\infty f_1\left(\frac{x}{t}\right)f_2(t)L_{f_1}\left(\frac{x}{t}\right)\frac{dt}{t}
\end{align*}
which, after substitution $t\mapsto \sqrt{x}t,$ leads to:
\begin{equation*}
2g(x)L_g(x)=\int_0^\infty f_1\left(\frac{\sqrt{x}}{t}\right)f_2(\sqrt{x} t)\left[
L_{f_1}\left(\frac{\sqrt{x}}{t}\right)+L_{f_2}(\sqrt{x}t)\right]\frac{dt}{t}\,.
\end{equation*}
Now, since both $L_{f_1}$ and $L_{f_2}$ are increasing in their arguments, it follows that, for every $t>0$,
\begin{align*}
L_{f_1}\left(\frac{\sqrt{x}}{t}\right)+L_{f_2}(\sqrt{x}t)\geq \min\{L_{f_1}(\sqrt{x}),L_{f_2}(\sqrt{x})\}=:\tilde{L}(\sqrt{x})\,.
\end{align*}
Therefore,
\begin{align*}
2g(x)L_g(x)\geq \tilde{L}(\sqrt{x})\int_0^\infty f_1\left(\frac{\sqrt{x}}{t}\right)f_2(\sqrt{x} t)
\frac{dt}{t}=\tilde{L}(\sqrt{x})g(x)\,,
\end{align*}
implying
$$
L_g(x)\geq \frac{1}{2} \tilde{L}(\sqrt{x})\,.
$$
The statement now follows.

$\hfill\Box$

\begin{corollary} If $f_1=f_2$, then
$$
L_g(x)\geq \frac{1}{2} L_f(\sqrt{x})\,.
$$
\end{corollary}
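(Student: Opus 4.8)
The plan is to obtain the Corollary as an immediate specialization of the lower bound derived in Part~2 of the proof of Theorem~\ref{th1}. Recall that, after the substitution $t\mapsto\sqrt{x}\,t$, the argument there produced the representation
\[
2g(x)L_g(x)=\int_0^\infty f_1\!\left(\frac{\sqrt{x}}{t}\right)f_2(\sqrt{x}\,t)\left[L_{f_1}\!\left(\frac{\sqrt{x}}{t}\right)+L_{f_2}(\sqrt{x}\,t)\right]\frac{dt}{t},
\]
and, invoking the monotonicity of $L_{f_1}$ and $L_{f_2}$, concluded the estimate
\[
L_g(x)\geq\tfrac12\,\tilde{L}(\sqrt{x}),\qquad \tilde{L}(\sqrt{x})=\min\{L_{f_1}(\sqrt{x}),\,L_{f_2}(\sqrt{x})\}.
\]
I would take this inequality as the starting point, since it is already established in the excerpt and may be assumed.

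The only step remaining is to set $f_1=f_2=f$ and observe that the minimum defining $\tilde{L}$ collapses. Indeed, the two factors now share a common Lin's function, $L_{f_1}=L_{f_2}=L_f$, so that
\[
\tilde{L}(\sqrt{x})=\min\{L_f(\sqrt{x}),\,L_f(\sqrt{x})\}=L_f(\sqrt{x}).
\]
Substituting this identity into $L_g(x)\geq\tfrac12\tilde{L}(\sqrt{x})$ yields precisely $L_g(x)\geq\tfrac12 L_f(\sqrt{x})$, which is the assertion.

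There is no genuine obstacle here: the corollary is a one-line consequence of the theorem, and the entire analytic content—the substitution, the factorization of $g$, and the monotonicity estimate—was already carried out in Part~2. The sole point to verify is that every step of that argument remains valid under the identification $f_1=f_2$, which is immediate, as those steps used only that each $L_{f_i}$ is increasing (true for the common $L_f$) together with the representation $g(x)=\int_0^\infty f(\sqrt{x}/t)\,f(\sqrt{x}\,t)\,\tfrac{dt}{t}$. Thus the work reduces entirely to the trivialization of the minimum in the symmetric case, and the proof is complete with essentially no further computation.
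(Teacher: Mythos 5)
Your proof is correct and follows exactly the route the paper intends: the corollary is stated immediately after Part~2 of the proof of Theorem~\ref{th1} precisely because it is the specialization $f_1=f_2=f$ of the established bound $L_g(x)\geq\frac{1}{2}\tilde{L}(\sqrt{x})$, under which the minimum $\tilde{L}(\sqrt{x})=\min\{L_{f_1}(\sqrt{x}),L_{f_2}(\sqrt{x})\}$ collapses to $L_f(\sqrt{x})$. Nothing further is needed.
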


\textbf{Proof of Theorem \ref{th2}}. Let us denote
$C(a,b;r):=\{(x,y)\in\mathbb{R}^2:(x-a)^2+(y-b)^2=r^2\}$,
$D(a,b;r):=\{(x,y)\in\mathbb{R}^2:(x-a)^2+(y-b)^2\leq r^2\}$
the circumference and the disc with the center $(a,b)$ and radius $r$ respectively.
For any $0<a<v$, consider the square:
$$
K=\{(x,y):v-a\leq x,y\leq v+a\}\subset \mathbb{R}^2\,.
$$
Fix $0<r<a/4$ and
consider a function $\rho (x,y)\in C^\infty (\mathbb{R}^2)$ such that:

$(i)\;\;\rho(x,y)=1$ when $(x,y)\in D(0,0;r/2)$;

$(ii)\;\;\rho(x,y)=0$ when $(x,y)\notin D(0,0;r)$;

$(iii)\;\;0\leq \rho (x,y)\leq 1$ for all $(x,y)\in \mathbb{R}^2$.

Such a function can be constructed, for a example, in the following way.
Starting with $q(t)\in C^\infty[0,\infty)$ satisfying the conditions $q(t)=1$
for $t\in [0,r^2/4]$, $q(t)=0$ for $t>r^2$, and $q(t)$ is monotone decreasing on $(r^2/4, r^2)$, we set:
\begin{equation}\label{rhoo}
\rho(x,y):=q(x^2+y^2)
\end{equation}
which is a desired function.
Now, using function $\rho$, put:
\begin{equation}\label{nu}
\varphi(x,y)=\beta \cdot\sin (\nu xy)\cdot \rho(x-v-a/2,y-v-a/2)\,,
\end{equation}
where $\beta$ is a fixed number such that
$$
0<\beta <\min\{f_1(x)f_2(y):(x,y)\in K\}
$$
and  $\nu >0$ is a parameter whose value will be determined later. Obviously, $\varphi(x,y)\in
C^\infty\left(\mathbb{R}^2\right)$, $\varphi (x,y)\geq 0$,
\begin{equation*}
\varphi(x,y)=\begin{cases}\beta \cdot\sin (\nu xy),&\text{for $(x,y)\in D(v+a/2,v+a/2;r/2)$,}\\
0,&\text{for $(x,y)\notin D(v+a/2,v+a/2;r)$.}
\end{cases}
\end{equation*}
 Now, define
\begin{equation*}
f(x,y):=
f_1(x)f_2(y)-\varphi(x,y)+\varphi(x,y+a)-\varphi(x+a,y+a)+\varphi(x+a,y)\,.
\end{equation*}
Obviously, $ f(x,y)\geq 0$ for all $(x,y)\in \mathbb{R}^2$ and $ f(x,y)=0$ outside of the first quadrant. What is more, $f(x,y)$ is a joint probability density of some positive random variables, say, $\xi_1$ and $\xi_2$, whose marginal distributions have given densities $f_1$ and $f_2$ and, as such,
the densities of $\xi_1$ and $\xi_2$ satisfy Lin's condition on $(0,+\infty)$. What about the density $g$ of their product $\xi_1\cdot\xi_2$?

To derive the conclusion of this Theorem, notice that, for each
$$
z\in \left((v+a/2)^2-r^2/10, (v+a/2)^2+r^2/10\right)\,,
$$
hyperbola $\Gamma_z:=\{(x,y)\in \mathbb{R}_+\times \mathbb{R}_+: xy=z\}$
intersects both of the circumferences $S(v+a/2,v+a/2;r)$ and $S(v+a/2,v+a/2;r/2)$
at two distinct points. Denote the abscissas of these points by $x_1<x_2<x_3<x_4$. By formula \eqref{product}, the density
\begin{equation*}
\begin{split}
&g(z)=
\int_0^\infty f(x,\frac{z}{x})\frac{dx}{x} - \int_{x_1}^{x_2}\varphi\left(x,\frac{z}{x}\right)\frac{dx}{x}-\int_{x_2}^{x_3}
\varphi\left(x,\frac{z}{x}\right)\frac{dx}{x} - \int_{x_3}^{x_4}\varphi\left(x,\frac{z}{x}\right)\frac{dx}{x}\\
&=:p(z)-I_1(z)-I_2(z)-I_3(z)=:p(z)-I(z)\,.
\end{split}
\end{equation*}
Here, $p(z)$ is the (continuous) density of the product of independent random variables with densities $f_1$ and $f_2$.
By \eqref{nu}, one has
\begin{equation*}
I_2(z)=\int_{x_2}^{x_3}\beta\sin (\nu z)\frac{dx}{x}=\beta\sin(\nu z)\log\frac{x_3}{x_2}\,.
\end{equation*}
We notice that $\log(x_3/x_2)\geq c=c_{v,a,r}$.
As for $I_1(z)$ and $I_2(z),$ it can be observed that they have the same sign as $\sin(\nu z)$ whenever $\sin (\nu z)\neq 0$. Let $z_1, z_2, $ and $z_3$ be successive extreme points of $\sin(\nu z)$ falling into interval $\left((v+a/2)^2-r^2/10, (v+a/2)^2+r^2/10\right)$. To be specific, opt for
\begin{equation*}
\sin(\nu z_1)=1,\quad \sin (\nu z_2)=-1, \quad\mathrm{and}\quad\sin(\nu z_3)=1.
\end{equation*}
This can be achieved by taking large $\nu$, so that the extreme points become very close. Since for all these value of $z$, one has $\log(x_3/x_2)\geq c>0$, it follows that
$$
I(z_1)>I_2(z_1)\geq\beta\sin(\nu z_1)\cdot c=c\beta\,,
$$
$$
I(z_2)<I_2(z_2)\leq\beta\sin(\nu z_2)\cdot c=-c\beta\,,
$$
implying $I(z_1)-I(z_2)>2c\beta$.
Correspondingly, there exists $z_*\in (z_1,z_2)$ such that
$$
I'(z_{*})<-\frac{2c\beta}{\pi/\nu}=-\frac{2c\beta\nu}{\pi}\,,
$$
which can achieve arbitrarily large negative values for  $\nu$ sufficiently large.
Likewise, adding $z_3$, one obtains: $I(z_3)\geq c\beta$, whence
$$
I(z_3)-I(z_2)\geq 2c\beta$$ and, consequently,
$$
I'(z_{**})>\frac{2c\beta\nu}{\pi}\;\;\mathrm{for\;\;some}\;\;z_{**}\in (z_2,z_3)\,.
$$
Since $g^\prime(z)=p^\prime(z)-I^\prime(z)$ and $p^\prime(z)$
is bounded on $\left[(v-a)^2,(v+a)^2\right]$ by constant independent from $\nu,$ it follows that, for $\nu$ large enough, there exist points
$$
z_*, z_{**}\in \left((v+a/2)^2-r^2/10, (v+a/2)^2+r^2/10\right)
$$
such that $g^\prime(z_*)\geq A$ and $g^\prime (z_{**})\leq -B$ for any prescribed $A, B >0$.

Applying the same procedure to an infinite sequence of disjoint squares
\begin{equation*}
K_n=\{(x,y):v_n-a_n\leq x,y\leq v_n+a_n,\}, \;n\in \mathbb{N}
\end{equation*}
one derives the statement.

\section{Acknowledgement} The second author expresses her sincere gratitude to Prof. Jordan Stoyanov who provided her with a file of \cite{kopanov2} prior to its publication.

%\newpage

\end{document}